\theoremstyle{plain}
\newtheorem{theorem}{Theorem}[section]
\newtheorem{proposition}[theorem]{Proposition}
\newtheorem{corollary}[theorem]{Corollary}
\theoremstyle{definition}
\newtheorem{example}[theorem]{Example}
\newcommand{\abs}[1]{\lvert#1\rvert}
\newcommand{\norm}[1]{\lVert#1\rVert}
\newcommand{\bigabs}[1]{\bigl\lvert#1\bigr\rvert}
\newcommand{\bignorm}[1]{\bigl\lVert#1\bigr\rVert}
\newcommand{\Bignorm}[1]{\Bigl\lVert#1\Bigr\rVert}
\newcommand{\term}[1]{{\textit{\textbf{#1}}}}
\renewcommand{\mid}{\::\:}
\def\one{\mathbb 1}
\DeclareMathOperator{\Range}{Range}
\begin{document}

\title{Spaces of regular abstract martingales}

\author{V.G. Troitsky}
\author{F. Xanthos}

\address
   {Department of Mathematical
   and Statistical Sciences, University of Alberta, Edmonton,
   AB, T6G\,2G1. Canada}
\email{troitsky@ualberta.ca,foivos@ualberta.ca}

\thanks{The authors were supported by NSERC grants}
\keywords{abstract martingale, regular martingale, vector lattice,
  Banach lattice}
\subjclass[2010]{Primary: 60G48. Secondary: 46A40, 46B42}

\date{\today}

\maketitle
\begin{abstract}
  In \cite{Troitsky:05,Korostenski:08}, the authors introduced and studied the space $\mathcal M_r$ of regular martingales on a vector lattice and the space $M_r$ of bounded regular martingales on a Banach lattice. In this note, we study these two spaces from the vector lattice point of view. We show, in particular, that these spaces need not be vector lattices. However, if the underlying space is order complete then $\mathcal M_r$ is a vector lattice and $M_r$ is a Banach lattice under the regular norm. 
\end{abstract}

\section{The space of regular martingales on a vector lattice}

Let $F$ be a vector lattice. A sequence $(E_n)$ of positive projections on $F$ such that $E_nE_m=E_{n\wedge m}$ is said to be a \term{filtration}. We will try to impose as few additional assumptions on the filtration as possible. A sequence $X=(x_n)$ in $F$ is a \term{martingale} with respect to the filtration $(E_n)$ if $E_nx_m=x_n$ whenever $m\ge n$.  A sequence $X=(x_n)$ in $F$ is a \term{supermartingale} if $E_nx_m\le x_n$ whenever $m\ge n$ (note that in our definition we do not require that $E_nx_n=x_n$). We denote with $\mathcal{M}=\mathcal{M}\bigl(F,(E_n)\bigr)$ the space of all martingales on $F$ with respect to the filtration $(E_n)$. The space $\mathcal{M}$ equipped with the coordinate-wise order is an ordered vector space and we denote with $\mathcal{M}_+$ the positive cone of $\mathcal{M}$. There is an extensive literature on abstract martingales on vector and Banach lattices, see, e.g., \cite{DeMarr:66,Uhl:71,Kuo:05,Troitsky:05,Korostenski:08,Labuschagne:10,Gessesse:11,Grobler:14,Gao:14,Gao:14a}. For unexplained terminology on ordered vector and Banach spaces we refer the reader to \cite{Aliprantis:06,Aliprantis:07,Meyer-Nieberg:91}.

The space of \term{regular martingales} is defined as follows:
\begin{displaymath}
  \mathcal{M}_r=\mathcal{M}_r\bigl(F,(E_n)\bigr)
   =\bigl\{X_1-X_2 \mid X_1,X_2 \in \mathcal{M}_+\bigr\}.
\end{displaymath}
Equivalently, a martingale $X$ is regular iff $\pm X\le Y$ for some positive martingale $Y$. This definition is motivated by the definition of a regular operator.  In this setting, it is a well known fact that the space of regular operators $\mathcal L_r(F)$ is itself a vector lattice when $F$ is order complete. So it has been a natural conjecture that $\mathcal M_r$ is a vector lattice whenever $F$ is order complete. We will prove that this is indeed the case. This result improves \cite[Theorem 2.3]{Korostenski:08}, which asserts that
$\mathcal{M}_r$ is a vector lattice (and is even order complete) provided that $F$ is order complete and for every $n$ the projection $E_n$ is order continuous and $\Range E_n$ is an order complete sublattice of $F$. 

It has also been an open question whether $\mathcal M_r$ is \emph{always} a vector lattice. We will present an example to the contrary.

\begin{theorem}\label{Mr-VL}
  Let $F$ be an order complete vector lattice and $(E_n)$ a filtration
  on $F$. Then $\mathcal M_r$ is an order complete vector lattice.
\end{theorem}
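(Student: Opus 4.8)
The plan is to reduce the whole statement to the existence of $X\vee 0$ in $\mathcal M_r$ for a single regular martingale $X$. Since $\mathcal M_+$ generates $\mathcal M_r$, and an ordered vector space with generating cone is a vector lattice as soon as positive parts exist (one then gets all binary suprema via $\sup(X,Y)=Y+(X-Y)^+$), it suffices to construct $X^+$. Moreover a martingale is $\ge 0$ in $\mathcal M_r$ exactly when it is a positive martingale, and every positive martingale dominating $X$ coordinatewise lies in $\mathcal M_r$; hence $\sup_{\mathcal M_r}(X,0)$, if it exists, must be the \emph{least positive martingale dominating $X$}. So fix $X=X_1-X_2$ with $X_1,X_2\in\mathcal M_+$, put $\mathcal D=\{W\in\mathcal M_+:W\ge X\}$, and define
\begin{displaymath}
  z_n=\inf\{W_n:W\in\mathcal D\}.
\end{displaymath}
Now $\mathcal D\ni X_1$ is nonempty, $0\le z_n\le(X_1)_n$, so each $z_n$ exists because $F$ is order complete, and clearly $z_n\ge x_n$ and $z_n\ge0$. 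Everything comes down to proving that $Z=(z_n)$ is itself a martingale: then $Z\in\mathcal D$ and $Z\le W$ for every $W\in\mathcal D$, so $Z$ is the least element of $\mathcal D$ and $X^+=Z$.

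One half is immediate: $E_nz_m\le E_nW_m=W_n$ for every $W\in\mathcal D$, so $E_nz_m\le z_n$ whenever $m\ge n$. The real issue is the reverse, $E_nz_m\ge z_n$ (i.e.\ that $Z$ is also a submartingale), and this is where order completeness of $F$ must be used essentially and where the hypotheses of \cite[Theorem~2.3]{Korostenski:08} (order continuity of each $E_n$, $\Range E_n$ an order complete sublattice) have to be avoided. I would treat it as a coherent‑extension problem: to get $E_nz_m\ge z_n$ for a fixed pair $m\ge n$ it suffices to exhibit one $W^\dagger\in\mathcal D$ with $W^\dagger_n\le E_nz_m$, since then $z_n=\inf_{W\in\mathcal D}W_n\le W^\dagger_n\le E_nz_m$. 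The natural choice is $W^\dagger_k:=E_kz_m$ for $k\le m$: this is a consistent finite martingale (because $E_jE_kz_m=E_jz_m$ for $j\le k\le m$) with $W^\dagger_k=E_kz_m\ge E_kx_m=x_k$ and $W^\dagger_k\ge0$ for $k\le m$, and $W^\dagger_m=E_mz_m\le W_m$ for every $W\in\mathcal D$. So the remaining point is an extension statement: a finite positive martingale $(w_1,\dots,w_m)$ with $w_k\ge x_k$ that lies coordinatewise below some full member of $\mathcal D$ can be completed to a full positive martingale dominating $X$. I expect this extension step to be the main obstacle. It cannot be done by simply "continuing by $X_1$": shrinking a dominating martingale down to a prescribed, possibly small value at time $m$ typically destroys positivity or domination at later times, so the corrective "martingale defect" must be built by hand, and it is precisely in realizing that defect as a supremum inside $F$ that Dedekind completeness enters. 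Equivalently, one shows that $\mathcal D$ is downward directed and that its coordinatewise infimum, computed in the order complete lattice $\prod_nF$, is again a martingale.

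Finally, once $\mathcal M_r$ is known to be a vector lattice, order completeness follows by the same circle of ideas: a bounded, upward‑directed family in $\mathcal M_r$ (one may pass to finite suprema, which exist) is dominated by a positive martingale, and its supremum in $\mathcal M_r$ is the least martingale dominating all its members; running the infimum/extension argument above on the family $\mathcal D'=\{W\in\mathcal M:W\ge X^{(i)}\text{ for all }i\}$ produces that least dominating martingale and hence the supremum in $\mathcal M_r$. Therefore $\mathcal M_r$ is an order complete vector lattice.
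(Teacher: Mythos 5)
Your overall strategy---reduce to the existence of $X\vee 0$, take the coordinatewise infimum $z_n=\inf\{W_n: W\in\mathcal D\}$ of the dominating family, observe that the inequality $E_nz_m\le z_n$ is automatic, and identify the reverse inequality $E_nz_m\ge z_n$ as the crux---is exactly the right skeleton, and it matches the paper's. But you stop at the crux: you reduce $E_nz_m\ge z_n$ to an extension problem (completing the finite martingale $(E_1z_m,\dots,E_mz_m)$ to a full positive martingale dominating $X$), explicitly flag it as ``the main obstacle,'' and do not solve it. That step is not a routine verification; as you yourself note, continuing by $X_1$ or by any other obvious tail fails, and no construction of the ``martingale defect'' is given. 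So the proposal has a genuine gap precisely at the point where the theorem's content lies.

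The paper's way around this is a single idea you are missing: enlarge the dominating family from positive martingales to \emph{supermartingales} (sequences with $E_ny_m\le y_n$ for $m\ge n$), and take $z_n=\inf\{y_n: Y\in\mathcal S\}$ over that larger class. Then for fixed $k$ the full sequence $Y=(E_1z_k,\dots,E_{k-1}z_k,\,z_k,z_{k+1},z_{k+2},\dots)$---your $W^\dagger$ glued to the tail of $Z$ itself, so no extension is needed---is checked by a three-case computation to be a supermartingale dominating the family, hence lies in $\mathcal S$, which forces $z_n\le E_nz_k$ for $n<k$ and closes the argument. Since every martingale dominating the family is in particular a supermartingale, the resulting $Z$ is still the least \emph{martingale} dominating it, so nothing is lost by the enlargement. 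If you want to salvage your version with $\mathcal D$ consisting only of positive martingales, you would have to prove your extension lemma, which is essentially equivalent to the theorem; the supermartingale relaxation is what dissolves that obstacle.
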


\begin{proof}
  Let $\mathcal A$ be a subset of $\mathcal{M}_r$ such that $\mathcal A$ is bounded from above in $\mathcal{M}_r$. We will show that $\sup\mathcal A$ exists in $\mathcal M_r$. Let $\mathcal S$ be the set of all supermartingales $Y$ that dominate $\mathcal A$, that is, $X\le Y$ for all $X\in\mathcal A$. By assumption, $\mathcal S$ is non-empty.
  For every $n$, put
  \begin{displaymath}
    z_n=\inf\bigl\{y_n\mid Y=(y_k)\in\mathcal S\bigr\}.
  \end{displaymath}
  We claim that $Z=(z_n)$ is a martingale and $Z=\sup\mathcal A$.

  First, observe that $X\le Z$ for each $X=(x_n)\in\mathcal A$. Indeed, for each $n$ and each $Y=(y_n)\in\mathcal S$ we have $x_n\le y_n$, so that $x_n\le z_n$.

  Next, observe that $Z$ is a supermartingale. Let $m\ge n$, then for
  every $Y\in\mathcal S$ we have $z_m\le y_m$, so that $E_nz_m\le
  E_ny_m\le y_n$. It follows that $E_nz_m\le z_n$.

  Next, we will show that $Z$ is, in fact, a martingale. Fix $k\in\mathbb
  N$ and define $Y=(y_n)$ as follows:
  \begin{displaymath}
     Y=(E_1z_k,E_2z_k,\dots,E_{k-1}z_k,z_k,z_{k+1},z_{k+2},\dots).
  \end{displaymath}
  We claim that $Y$ is a supermartingale. Indeed, let $n\le m$.
  \begin{eqnarray*}
    \text{If}\quad k\le n&\text{ then }&E_ny_m=E_nz_m\le z_n=y_n;\\
    \text{if}\quad m<k&\text{ then }&E_ny_m=E_nE_mz_k=E_nz_k=y_n;\\
    \text{if}\quad n<k\le m&\text{ then }&E_ny_m=E_nz_m=E_nE_kz_m\le E_nz_k=y_n.\\
  \end{eqnarray*}
  Next, note that $X\le Y$ for each $X=(x_n)\in\mathcal A$. Indeed, if $n\ge k$ then
  $y_n=z_n\ge x_n$. If $n<k$ then $x_k\le z_k$ implies $E_nx_k\le E_nz_k$, so that $x_n\le y_n$.

  This yields that $Y\in\mathcal S$, so that $Z\le Y$. It follows that
  for every $n<k$ we have $z_n\le y_n=E_nz_k$, so that $z_n\le
  E_nz_k$. Therefore, $z_n=E_nz_k$ for all $n$ and $k$ with $n<k$. Also note that
  \begin{math}
    E_nz_n=E_nE_nz_{n+1}=E_nz_{n+1}=z_n
  \end{math}
  for every $n$.  Thus, $Z$ is a martingale and $X\le Z$ for each
  $X\in\mathcal A$. Clearly, every martingale $Y$ dominating $\mathcal
  A$ is in $\mathcal S$ and, therefore, $Z\le Y$. Hence,
  $Z=\sup\mathcal A$.

  Let $X\in\mathcal M_r$. Applying the previous argument with $\mathcal A=\{\pm X\}$, we conclude that $\abs{X}$ exists. It follows that $\mathcal M_r$ is a vector lattice. By the preceding computation, it is order complete.
\end{proof}

\begin{example}
  \emph{$\mathcal{M}_r$ need not be a vector lattice}.
  Let $F=c$. For each $n$, define $E_n\colon F\to F$ as follows: if $x=(\alpha_i)$ we put
  \begin{multline*}
    E_nx=
    \bigl(\alpha_1,\dots,\alpha_{3n},
     \tfrac{\alpha_{3n+1}+\alpha_{3n+2}}{2},
     \tfrac{\alpha_{3n+1}+\alpha_{3n+2}}{2},\alpha_{3n+3},\\
     \tfrac{\alpha_{3n+4}+\alpha_{3n+5}}{2},
     \tfrac{\alpha_{3n+4}+\alpha_{3n+5}}{2},\alpha_{3n+6},
    \dots\bigr).
  \end{multline*}
  It is easy to see that $(E_n)$ is a filtration on $c$ and each $E_n$ is order continuous. It is a \term{dense} filtration in the sense that
  \begin{math}
    \bigcup_{n=1}^\infty\Range E_n
  \end{math}
  is dense in $F$.  Define $(x_n)$ as follows:
  \begin{displaymath}
    x_n=(\underbrace{1,-1,0,1,-1,0,\dots,1,-1,0}_{3n},0,0,\dots).
  \end{displaymath}
  Note that $X=(x_n)$ is a martingale with respect to $(E_n)$; it is regular because $\pm x_n\le\one$ for every $n$, where $\one$ is the constant one sequence. We will write $x_{n,i}$ for the $i$-th coordinate of $x_n$. We claim that $X$ has no modulus in $\mathcal{M}_r$. Indeed, suppose that $\pm X\le Y$ for some martingale $Y$, $Y=(y_n)$. For each $n$ we have $y_n\ge\pm x_n$, so that
  \begin{displaymath}
    y_n\ge(\underbrace{1,1,0,1,1,0,\dots,1,1,0}_{3n},0,0,\dots)=u_n.
  \end{displaymath}
  It follows that
  \begin{displaymath}
    y_1=E_1y_n\ge E_1u_n
       =u_n.
  \end{displaymath}
  Since $n$ is arbitrary, it follows that $y_{1,3k+1}\ge 1$ and  $y_{1,3k+2}\ge 1$ for every $k$. Since $y_1$ is an element of $c$, there is $k_0$ such that $y_{1,3k_0}>0$. Define a martingale $Z=(z_n)$ as follows: for every $n$ and $i$, put $z_{n,i}=y_{n,i}$ except when $i=3k_0$, in this case put $z_{n,3k_0}=0$ (for every $n$). It is easy to see that $Z$ is a martingale and $\pm X\le Z<Y$. It follows that $X$ has no modulus.
\end{example}

\section{Krickeberg's formula}

Once again using the analogy with regular operators, we recall that if $F$ is an order complete vector lattice then $\mathcal L_r(F)$ is a vector lattice and the lattice operations on $\mathcal L_r(F)$ are given by the Riesz-Kantorovich formula. There is a similar formula for lattice operations on $\mathcal M_r$. Let $X=(x_n)$ be a martingale. It has been observed in the literature that that the modulus $\abs{X}$ of a regular martingale $X=(x_n)$ often satisfies the following identity:
\begin{displaymath}
  \abs{X}_n=\sup_{m\ge n} E_n\abs{x_m}.
\end{displaymath}
For classical martingales, this identity goes back to Krickeberg's decomposition (see i.e., \cite[p.~32]{Meyer:72}); in the following we will refer to it as \term{Krickeberg's formula}. If the Krickeberg's formula is valid for $F$, that is, if the modulus of every martingale in $\mathcal M_r\bigl(F,(E_n)\bigr)$ is given by the Krickeberg's formula, then, clearly, $\mathcal M_r$ is a vector lattice and the other lattice operations are given by similar formulae; see, e.g., \cite[Theorem~7]{Troitsky:05}.

In the following proposition, we summarize several cases where $\mathcal{M}_r$ is a vector lattice and the lattice operations are given by Krickeberg's formula; it extends \cite[Theorem~2.3]{Korostenski:08}, \cite[Proposition 11]{Troitsky:05}, and \cite[Proposition 4]{Gessesse:11}.

\begin{proposition}\label{Krickeberg}
  Let $F$ be a vector lattice and $(E_n)$ a filtration on $F$. Suppose that any of the following hold.
  \begin{enumerate}
  \item\label{oc-oc} $F$ is order complete and each $E_n$ is order continuous;
  \item\label{fin-rank} $F$ is Archimedean and $E_n$ is of finite rank for each $n$;
  \item\label{lat-hom} $E_n$ is a lattice homomorphism for each $n$.
  \end{enumerate}
  Then $\mathcal{M}_r$ is a vector lattice and the lattice operations are given by the Krickeberg's formula.
\end{proposition}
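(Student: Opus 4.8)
The plan is to prove that for every $X=(x_n)\in\mathcal M_r$ the supremum $\abs X_n:=\sup_{m\ge n}E_n\abs{x_m}$ exists in $F$ for each $n$ and that $\abs X:=(\abs X_n)$ is the modulus of $X$ in $\mathcal M_r$. Since an ordered vector space in which every element has a modulus is a vector lattice, and since $X\vee Y=\tfrac12\bigl(X+Y+\abs{X-Y}\bigr)$ then forces $(X\vee Y)_n=\sup_{m\ge n}E_n(x_m\vee y_m)$ (and similarly for $\wedge$ and for $(\,\cdot\,)^\pm$), exactly as in \cite[Theorem~7]{Troitsky:05}, this yields the proposition.

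I would first isolate what is true in full generality. Fix $X\in\mathcal M_r$ and a positive martingale $Y=(y_n)$ with $\pm X\le Y$; then $\abs{x_m}\le y_m$ for every $m$, and using $E_nE_m=E_n$ for $n\le m$ together with $E_mx_{m'}=x_m$ one checks that $\bigl(E_n\abs{x_m}\bigr)_{m\ge n}$ is increasing in $m$ with $E_n\abs{x_m}\le y_n$. Suppose now that, for the given $X$, the suprema $w_n:=\sup_{m\ge n}E_n\abs{x_m}$ exist in $F$ and that $W:=(w_n)$ is a martingale. Then $W$ is a positive martingale, so $W\in\mathcal M_+\subseteq\mathcal M_r$; since $w_n\ge E_n\abs{x_n}\ge\pm x_n$ we get $\pm X\le W$; and if $V\in\mathcal M_r$ satisfies $\pm X\le V$ then $v_n\ge\abs{x_n}$ in $F$, hence $v_n=E_nv_m\ge E_n\abs{x_m}$ for all $m\ge n$, hence $v_n\ge w_n$. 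Thus $W=\sup\{X,-X\}=\abs X$, and it only remains, in each of the three cases, to produce $w_n$ and verify the martingale identity $E_nw_m=w_n$ for $n\le m$; here $E_nw_m\ge w_n$ is automatic from positivity, so the content is $E_nw_m\le w_n$.

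Cases \eqref{oc-oc} and \eqref{lat-hom} are quick. If each $E_n$ is a lattice homomorphism, then $E_n\abs{x_m}=\abs{E_nx_m}=\abs{x_n}$ for $m\ge n$, so $w_n=\abs{x_n}$ and $W$ is a martingale. If $F$ is order complete the suprema exist because the sets are bounded above, and order continuity of each $E_n$ lets one pull the supremum through: $E_nw_m=\sup_{k\ge m}E_nE_m\abs{x_k}=\sup_{k\ge m}E_n\abs{x_k}=w_n$, the last equality because a tail of an increasing sequence has the same supremum.

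The main obstacle is case \eqref{fin-rank}: here $F$ need not be order complete, and a finite-rank positive projection need not be order continuous (for instance $x=(\alpha_k)\mapsto(\lim_k\alpha_k)\one$ on $c$), so the case does not reduce to the previous one. My plan is to work inside the principal ideals $F_{y_n}$ with their order-unit norms $\norm{\cdot}_{y_n}$. The increasing bounded sequence $\bigl(E_n\abs{x_m}\bigr)_{m\ge n}$ lies in $[0,y_n]\cap\Range E_n$, a closed bounded subset of the finite-dimensional normed space $\Range E_n\cap F_{y_n}$, hence compact; a norm-convergent subnet has, by the Archimedean property of $F$, the supremum $w_n$ as its limit. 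Since $E_ny_m=y_n$, the operator $E_n$ is $\norm{\cdot}_{y_m}$-to-$\norm{\cdot}_{y_n}$ contractive, so applying it to a subnet of $\bigl(E_m\abs{x_k}\bigr)_{k\ge m}$ converging to $w_m$ gives $E_nw_m=\lim_k E_n\abs{x_k}\le w_n$, the missing inequality. I expect the only genuinely delicate point to be this last step; everything else is routine bookkeeping.
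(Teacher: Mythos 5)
Your argument is correct, and cases \eqref{oc-oc} and \eqref{lat-hom}, as well as the general reduction to the existence of the martingale $W=(w_n)$ with $w_n=\sup_{m\ge n}E_n\abs{x_m}$, coincide with the paper's proof. The interesting divergence is in case \eqref{fin-rank}. The paper views $H_n=\Range E_n$ as a finite-dimensional ordered Banach space, checks that $H_n^+$ is closed and normal (citing Aliprantis--Tourky), invokes their theorem that an increasing order-bounded sequence in such a cone is norm convergent to get $z_n=\lim_m E_n\abs{x_m}$, and then needs an extra step -- adjoining an arbitrary upper bound $a$ to $H_n$ and re-running the closed-cone argument in the enlarged finite-dimensional space $G$ -- to upgrade ``supremum in $H_n$'' to ``supremum in $F$''. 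You instead equip the principal ideal $F_{y_n}$ with its order-unit norm (a genuine norm since $F$ is Archimedean), observe that $[0,y_n]\cap\Range E_n$ is compact in the finite-dimensional space $\Range E_n\cap F_{y_n}$, and extract a convergent subsequence whose limit is, by the Archimedean property, the supremum of the increasing sequence \emph{in all of $F$} -- so the upgrade step disappears. Your verification of the martingale identity, via the fact that $E_ny_m=y_n$ makes $E_n$ contractive from $\norm{\cdot}_{y_m}$ to $\norm{\cdot}_{y_n}$ and that the positive cone is $\norm{\cdot}_{y_n}$-closed, is a clean substitute for the paper's continuity-of-$E_k$-on-$H_n$ argument. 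Both routes are valid; yours is more self-contained (it uses only the standard facts about order-unit seminorms on Archimedean lattices rather than the cited normality/convergence theorems), while the paper's keeps the exposition parallel to its references. The only point worth writing out in a final version is the standard lemma that if an increasing sequence converges relatively uniformly (with regulator $y_n$) then its limit is its supremum in $F$; you assert this correctly but tersely.
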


\begin{proof}
  Let $X=(x_n)$ be a martingale such that $\pm X\le Y$ for some positive martingale $Y=(y_n)$. For a fixed $n$, the sequence $\bigl(E_n\abs{x_m}\bigr)_{m=n}^\infty$ is increasing in $m$, bounded below by $\abs{x_n}$ and above by $y_n$. Indeed, if $n\le m$ then
\begin{displaymath}
  E_n\abs{x_m}=E_n\bigabs{E_mx_{m+1}}\le E_nE_m\abs{x_{m+1}}=E_n\abs{x_{m+1}},
\end{displaymath}
\begin{displaymath}
  \abs{x_n}=\bigabs{E_nx_n}\le E_n\abs{x_n},\quad\text{and}\quad
  E_n\abs{x_m}\le E_ny_m=y_n.
\end{displaymath}

  \eqref{oc-oc} Since $F$ is order complete, $\sup\limits_{m\ge n}E_n\abs{x_m}$ exists for every $n$. Denote it $z_n$ and put $Z=(z_n)$. Clearly, $\pm X\le Z\le Y$. Note that $Z$ is a martingale: for every $k\le n$, since $E_k$ is order continuous, we have
  \begin{displaymath}
    E_kz_n=E_k\bigl(\sup_{m\ge n}E_n\abs{x_m}\bigr)
    =\sup_{m\ge n}E_kE_n\abs{x_m}=\sup_{m\ge n}E_k\abs{x_m}=z_k.
  \end{displaymath}
  It follows that $Z=\abs{X}$. Note that $Z$ is given by the Krickeberg's formula.

  \eqref{fin-rank} Let $H_n=\Range E_n$ and $H_n^+=F^+\cap H_n$. Since $H_n$ is finite-dimensional, we may view it as an ordered Banach space. Note that $\bigl(E_n\abs{x_m}\bigr)_{m=n}^\infty$ and $y_n$ are in $H_n^+$. Since $F$ is Archimedean and $H_n$ is finite-dimensional, $H_n^+$ is closed (in $H_n$) by \cite[Corollary~3.4]{Aliprantis:07} and normal by \cite[Lemma~3.1]{Aliprantis:07}. It follows from \cite[Theorem~2.45]{Aliprantis:07} that
\begin{math}
  \lim_mE_n\abs{x_m}=z_n
\end{math}
in $H_n$. Since $H_n^+$ is closed, it follows from
\begin{math}
  \abs{x_n}\le E_n\abs{x_m}\le y_n
\end{math}
that $\abs{x_n}\le z_n\le y_n$. 

  Repeating this process for every $n$, we produce a sequence $Z=(z_n)$ in $F_+$. To show that $Z$ is a martingale, let $k\le n$. Since $E_k$ is a continuous operator on $H_n$, we have
\begin{displaymath}
  E_kz_n=E_k\bigl(\lim_{m\to\infty}E_n\abs{x_m}\bigr)=
    \lim_{m\to\infty}E_kE_n\abs{x_m}=
    \lim_{m\to\infty}E_k\abs{x_m}=z_k,
\end{displaymath}
where the limit is taken in $H_n$. It follows from $\pm X\le Z\le Y$ that $Z=\abs{X}$. 

  To verify Krickeberg's formula, it suffices to show that $z_n=\sup_mE_n\abs{x_m}$. It follows from \cite[Lemma~2.3(4)]{Aliprantis:07} that $z_n=\sup_mE_n\abs{x_m}$ in $H_n$. Let $a\in F$ such that $E_n\abs{x_m}\le a$ for all $m\ge n$. Let $G$ be the subspace of $F$ spanned by $H_n$ and $a$. Again, we may view it as an ordered Banach space with closed positive cone $G_+=F_+\cap G$; $H_n$ is a closed subspace of $G$. Hence, we still have
\begin{math}
  \lim_mE_n\abs{x_m}=z_n
\end{math}
in $G$. Applying \cite[Lemma~2.3(4)]{Aliprantis:07} to $G$, we conclude that  $z_n=\sup_mE_n\abs{x_m}$ in $G$, and, therefore, $z_n\le a$. 

  \eqref{lat-hom} It is easy to see that the sequence $\bigl(\abs{x_n}\bigr)$ is a martingale and is the modulus of $X$. Also, for every fixed $n$ and every $m\ge n$ we have $E_n\abs{x_m}=\bigabs{E_nx_m}=\abs{x_n}$, so that  Krickeberg's formula s valid.

\end{proof}

It is an open problem whether the modulus of an operator is always given by the Riesz-Kantorovich formula, see~\cite[p.~59]{Aliprantis:07} for details. Similarly, it has been a natural conjecture that Krieckeberg's formula is always valid whenever $\mathcal M_r$ is a vector lattice. However, we will present a counterexample to the contrary. Our example will be based on \cite[Example~6]{Gessesse:11}, which we outline here for convenience of the reader.

\begin{example}(\cite{Gessesse:11})\label{halves}
Let $F=\mathbb{R}^\mathbb{N}$. For $n=0,1,2,\dots$, define $E_n$ via
\begin{multline*}
  E_n\bigl((a_i)\bigr)=
  \bigl(a_1,a_2,\dots,a_{2n},
  \frac{a_{2n+1}+a_{2n+2}}{2},\frac{a_{2n+1}+a_{2n+2}}{2},\\
  \frac{a_{2n+3}+a_{2n+4}}{2},\frac{a_{2n+3}+a_{2n+4}}{2},\dots\bigr)
\end{multline*}
Let $X=(x_n)_{n=0}^\infty$ where
\begin{displaymath}
  x_n=\bigl(\underbrace{-1,1,\dots,-1,1}_{2n},0,0,\dots\bigr)
\end{displaymath}
(we take $x_0=0$).
It is easy to see that $(E_n)$ is a filtration on $F$ and $X$ is a martingale with respect to $(E_n)$.
\end{example}

\begin{example}\emph{$\mathcal{M}_r$ is a vector lattice, yet Krickeberg's formula fails.}
  Let $F=\ell_\infty$; let $(E_n)$ and $X=(x_n)$ be as in Example~\ref{halves}, $n=0,1,\dots$.  Let
 $\varphi\colon F\to\mathbb R$ be a Banach limit. Put $P=\varphi\otimes\one$. It is easy to see that $P$ is a rank-one projection and that $E_nP=P$ for every $n$. It follows that the sequence $\bigl(PE_0,E_0,E_1,E_2,\dots\bigr)$ is a filtration on $F$ and the sequence $X=(x_0,x_0,x_1,x_2,\dots)$ is a martingale with respect to this filtration. Note that $\mathcal M_r$ is a vector lattice by Theorem~\ref{Mr-VL}.

We claim that $\abs{X}=(\one,\one,\one,\one,\dots)$. Indeed, it is easy to see that the sequence $(\one,\one,\one,\one,\dots)$ is a martingale which dominates $\pm X$. Now suppose $\pm X\le Y$ for some martingale $Y$. Put $Y=(z,y_0,y_1,\dots)$. For every $n\ge 0$ and $m\ge n$ we have $y_m\ge\abs{x_m}$, so that $y_n=E_ny_m\ge E_n\abs{x_m}$. Note that
 \begin{displaymath}
   E_n\abs{x_m}=\abs{x_m}=
   \bigl(\underbrace{1,1,\dots,1,1}_{2m},0,0,\dots\bigr).
 \end{displaymath}
This yields $y_n\ge\one$ for every $n\ge 0$. It follows from $y_0\ge\one$ that $z=PE_0y_0\ge\one$. Hence, $Y\ge(\one,\one,\one,\one,\dots)$. Therefore,
$\abs{X}=(\one,\one,\one,\one,\dots)$.

However, Krickeberg's formula for the initial term gives $\sup_{m}PE_0\abs{x_m}=0$ instead of $\one$.
\end{example}

\section{The space of regular bounded martingales on a~Banach lattice}

We say that $(E_n)$ is \term{uniformly bounded} if $\sup_n\norm{E_n}<+\infty$; we say that  $(E_n)$ is \term{contractive} if $\norm{E_n}\le 1$ for every $n$. A martingale $X=(x_n)$ in $\mathcal{M}\bigl(F,(E_n)\bigr)$ is said to be \term{bounded} if its \term{martingale norm} defined by $\norm{X}=\sup_n\norm{x_n}$ is finite. We denote by $M=M\bigl(F,(E_n)\bigr)$ the space of all bounded martingales on $F$ with respect to the filtration $(E_n)$. It is easy to see that $M$ is a closed subspace of $\ell_\infty(F)$; hence $M$ is a Banach space. It can be easily verified that the martingale norm is \term{monotone}, i.e., $0\le X\le Y$ implies $\norm{X}\le\norm{Y}$. The space of regular bounded martingales is the following subspace of $M$:
\begin{displaymath}
  M_r=M_r\bigl(F,(E_n)\bigr)=
  \bigl\{X_1-X_2 \mid X_1,X_2 \in M_+\bigr\}.
\end{displaymath}
Again, one may expect similarities with the well-known theory of regular operators; see, e.g., \cite{Wickstead:07,Xanthos:15}. It is well known that every regular operator on a Banach lattice is bounded and the space of regular operators on an order complete Banach lattice is a Banach lattice under the \emph{regular norm}. We will prove that if $F$ is order complete then $M_r$ is a Banach lattice under the regular norm. We will show that, in contrast to the setting of regular operators, in general $M_r \neq  \mathcal{M}_r$. Furthermore, $F$ is a KB-space iff $F$ is order continuous and every bounded martingale with respect to every uniformly bounded filtration is regular. 

\begin{example}
  \emph{Positive unbounded martingale on $\ell_1$.}
  For any $0\le \alpha\le 1$, define
  \begin{math}
    P_{\alpha}=
    \Bigl[
    \begin{smallmatrix}
      0 & 0 \\ \alpha & 1
    \end{smallmatrix}
    \Bigr].
  \end{math}
  It is easy to see that $P_{\alpha}$ is a positive projection onto $e_2$, and $P_\alpha$ is a contraction when viewed as an operator on $\ell_1^2$.  Define a filtration on $\ell_1$ as follows.
  \begin{multline*}
  E_1=
  \begin{bmatrix}
      0 & 0 &  &  &  &  &    \\
      1 & 1 &  &  &  && \\
      & & 0 & 0 &&& \\
      & & \frac12 & 1 &&& \\
      & & & & 0 & 0 & \\
      & & & & \frac14 & 1 & \\
      & & & & & & \ddots \\
  \end{bmatrix},
  \quad
  E_2=
  \begin{bmatrix}
      1 & 0 &  &  &  &  &  &  \\
      0 & 1 &  &  &  &&& \\
      & & 0 & 0 &&& \\
      & & \frac12 & 1 &&& \\
      & & & & 0 & 0 & \\
      & & & & \frac14 & 1 & \\
      & & & & & &  \ddots \\
  \end{bmatrix},\\
  E_3=
  \begin{bmatrix}
      1 & 0 &  &  &  &  &  &  \\
      0 & 1 &  &  &  &&& \\
      & & 1 & 0 &&& \\
      & & 0 & 1 &&& \\
      & & & & 0 & 0 & \\
      & & & & \frac14 & 1 & \\
      & & & & & & \ddots \\
  \end{bmatrix},
  \text{ etc.}
  \end{multline*}
  It is easy to see that this is a filtration $\norm{E_n}=1$. Further, define
  \begin{eqnarray*}
    x_1&=&(0,1,\ 0,\tfrac12,\ 0,\tfrac14,\ 0,\tfrac18,\dots),\\
    x_2&=&(1,0,\ 0,\tfrac12,\ 0,\tfrac14,\ 0,\tfrac18,\dots),\\
    x_3&=&(1,0,\ 1,0,\ 0,\tfrac14,\ 0,\tfrac18,\dots),\\
    x_4&=&(1,0,\ 1,0,\ 1,0,\ 0,\tfrac18,\dots),\\
    \text{etc.}&&
  \end{eqnarray*}
  It can be easily verified that $(x_n)$ is a positive martingale with respect to the filtration $(E_n)$, but $\norm{x_n}>n-1$, so this martingale is unbounded.
\end{example}

On $M_r$, we define the following so called \term{regular norm}:
$$\norm{X}_r=\inf\bigl\{\norm{Y}\mid Y \in M_+,\ Y \ge\pm X\bigr\}.$$

We claim that the space $\bigl(M_r,\norm{\cdot}_r\bigr)$ is a Banach space. We will prove a more general result for ordered Banach spaces. In particular this result is known to be true for the space of regular operators and the space generated by positive compact operators (\cite{Chen:97}, Proposition 2.2).

\begin{theorem}\label{reg-norm}
  Suppose that $\bigl(X,\norm{\cdot}\bigr)$ is an ordered normed space with a closed cone $X_+$ and a monotone norm. Then the following formula defines a norm on $X_r=X_+-X_+$:
$$\norm{x}_r=\inf\bigl\{\norm{y}\mid y \in X_+,\ y \ge\pm x\bigr\}.$$
For every $z \in X_r$, we have $\norm{z}\le 2\norm{z}_r$. Moreover, if $X$ is complete then $\bigl(X,\norm{\cdot}_r\bigr)$ is complete and if, in addition, $X=X_r$ then $\norm{\cdot}$ and $\norm{\cdot}_r$ are equivalent. If $X_r$ is a vector lattice then $\norm{x}_r=\bignorm{\abs{x}}$ for all $x\in X_r$.
\end{theorem}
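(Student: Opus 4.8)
The plan is to check the five assertions one at a time; once closedness of $X_+$ and monotonicity of $\norm{\cdot}$ are used in the right places, each is short. First, $\norm{\cdot}_r$ is finite on $X_r$: writing $x=u-v$ with $u,v\in X_+$ gives $u+v\ge\pm x$, so the infimum is over a nonempty set. Positive homogeneity is immediate by rescaling a dominating element, and the triangle inequality follows because $y_1\ge\pm x_1$ and $y_2\ge\pm x_2$ force $y_1+y_2\ge\pm(x_1+x_2)$. Only definiteness uses the hypotheses: if $\norm{x}_r=0$, choose $y_k\in X_+$ with $y_k\ge\pm x$ and $\norm{y_k}\to 0$; then $y_k-x\to -x$ and $y_k+x\to x$ with both sequences in $X_+$, so closedness of the cone forces $\pm x\in X_+$, hence $x=0$.

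For the estimate, fix $z\in X_r$ and $y\in X_+$ with $-y\le z\le y$. Then $0\le z+y\le 2y$ and $0\le y-z\le 2y$, so monotonicity gives $\norm{z+y}\le 2\norm{y}$ and $\norm{y-z}\le 2\norm{y}$; since $z=\tfrac12\bigl((z+y)-(y-z)\bigr)$ we get $\norm{z}\le 2\norm{y}$, and taking the infimum over $y$ yields $\norm{z}\le 2\norm{z}_r$.

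For completeness of $(X_r,\norm{\cdot}_r)$ when $X$ is complete, I would use the criterion that a normed space is complete iff every absolutely convergent series converges. Let $(z_n)\subseteq X_r$ with $\sum_n\norm{z_n}_r<\infty$, and pick $y_n\in X_+$ with $y_n\ge\pm z_n$ and $\norm{y_n}\le\norm{z_n}_r+2^{-n}$. Then $\sum_n\norm{y_n}<\infty$ and, by the estimate above, $\sum_n\norm{z_n}<\infty$; so in the Banach space $X$ the sums $y=\sum_n y_n$, $s=\sum_n z_n$ and all tails $w_N=\sum_{n>N}y_n$ exist. Every finite partial sum of $(y_n-z_n)_{n>N}$ and of $(y_n+z_n)_{n>N}$ lies in $X_+$, so closedness of $X_+$ gives $w_N\ge\pm\bigl(s-\sum_{n\le N}z_n\bigr)$; the case $N=0$ shows $s\in X_r$, and in general $\norm{s-\sum_{n\le N}z_n}_r\le\norm{w_N}\le\sum_{n>N}\norm{y_n}\to 0$, so the series converges in $(X_r,\norm{\cdot}_r)$. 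If in addition $X=X_r$, then the identity map $(X_r,\norm{\cdot}_r)\to(X,\norm{\cdot})$ is a bounded linear bijection between Banach spaces, so by the open mapping theorem its inverse is bounded too, i.e.\ $\norm{\cdot}$ and $\norm{\cdot}_r$ are equivalent. Finally, if $X_r$ is a vector lattice, its positive cone is $X_r\cap X_+=X_+$, so $\abs{x}=x\vee(-x)$ lies in $X_+$ and dominates $\pm x$, giving $\norm{x}_r\le\norm{\abs{x}}$; conversely any $y\in X_+$ with $y\ge\pm x$ satisfies $0\le\abs{x}\le y$, hence $\norm{\abs{x}}\le\norm{y}$ by monotonicity, and taking the infimum gives the reverse inequality.

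The only genuinely delicate step is the completeness argument, where the point to watch is the repeated use of closedness of $X_+$ to keep limits of positive finite partial sums inside the cone; everything else is routine, and the norm-equivalence then falls out of the open mapping theorem for free.
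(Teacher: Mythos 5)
Your proof is correct and follows essentially the same route as the paper: the same dominating-element argument for the triangle inequality, the same decomposition $z=\tfrac12(z+y)-\tfrac12(y-z)$ for the estimate $\norm{z}\le2\norm{z}_r$, the same use of closedness of $X_+$ on limits of positive partial sums for completeness (your absolutely-convergent-series criterion is just the standard repackaging of the paper's pass-to-a-rapidly-Cauchy-subsequence argument), the same open mapping theorem step, and the same two inequalities in the vector lattice case. The only cosmetic difference is that you prove definiteness directly from closedness of the cone, whereas the paper deduces it from the inequality $\norm{z}\le2\norm{z}_r$; both are fine.
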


\begin{proof}
  It is easy to see that $\norm{\cdot}_r$ is positively
  homogeneous. To verify the triangle inequality, let $u,v\in X_r$,
  take any $\varepsilon>0$, and find $x,y\in X_+$ such that $-x\le
  u\le x$ and $-y\le v\le y$, $\norm{x}\le\norm{u}_r+\varepsilon$, and
  $\norm{y}\le\norm{v}_r+\varepsilon$. It follows that
  \begin{math}
    -(x+y)\le u+v\le x+y,
  \end{math}
  so that
  \begin{displaymath}
    \norm{u+v}_r\le\norm{x+y}\le\norm{x}+\norm{y}\le\norm{u}_r+\norm{v}_r+2\varepsilon.
  \end{displaymath}
  This yields $\norm{u+v}_r\le\norm{u}_r+\norm{v}_r$.

  Fix $z\in X_r$. Let $x\in X_+$ be such that $-x\le z\le x$. Then $0\le x\pm z\le 2x$. It follows that
  \begin{displaymath}
    \norm{z}=\bignorm{\tfrac12(x+z)-\tfrac12(x-z)}
    \le\tfrac12\norm{x+z}+\tfrac12\norm{x-z}
    \le\tfrac12\norm{2x}+\tfrac12\norm{2x}
    =2\norm{x}.
  \end{displaymath}
  Taking the infimum
  over all such $x$, we get $\norm{z}\le2\norm{z}_r$. In particular,
  if $\norm{z}_r=0$ then $\norm{z}=0$ and, therefore, $z=0$.

  Now suppose that $\bigl(X,\norm{\cdot}\bigr)$ is complete and show
  that $\bigl(X_r,\norm{\cdot}_r\bigr)$ is complete. Let $(z_n)$ be a
  $\norm{\cdot}_r$-Cauchy sequence in $X_r$. Note that since $\norm{\cdot}\le
  2\norm{\cdot}_r$, the sequence is also Cauchy in the original norm,
  hence $z_n\xrightarrow{\norm{\cdot}}z$ for some $z\in X$. It suffices
  to show that $z\in X_r$ and some subsequence of $(z_n)$ converges to
  $z$ in $\norm{\cdot}_r$ because, in this case, the entire sequence
  would still converge to $z$ in $\norm{\cdot}_r$.

  Without loss of generality, passing to a subsequence, we may assume
  that for every $n$ and every $k\ge n$ we have
  $\norm{z_n-z_k}_r\le\frac{1}{3^n}$. For each $n$, $z_{n+1}-z_n\in
  X_r$, so we can find $x_n\in X_+$ such that $-x_n\le z_{n+1}-z_n\le
  x_n$ and $\norm{x_n}\le\frac{1}{2^n}$. Fix $m$. It follows from
  $z_n\xrightarrow{\norm{\cdot}}z$ that
  $z-z_m=\sum_{n=m}^\infty(z_{n+1}-z_n)$, where the series converges
  in $\norm{\cdot}$. Note that
  \begin{displaymath}
    \sum_{n=m}^k(z_{n+1}-z_n)\le\sum_{n=m}^kx_n
  \end{displaymath}
  for every $k>m$ and $\sum_{n=m}^\infty x_n$ converges in $\norm{\cdot}$. Since $X_+$
  is closed, $z-z_m\le\sum_{n=m}^\infty x_n$.
  Similarly, $-(z-z_m)\le\sum_{n=m}^\infty x_n$.
  It follows that
  \begin{displaymath}
    \norm{z-z_m}_r\le\Bignorm{\sum_{n=m}^\infty x_n}
    \le\sum_{n=m}^\infty\norm{x_n}
    \le\tfrac{1}{2^{m-1}}\to 0.
  \end{displaymath}
  If, moreover, $X=X_r$, we have that $\norm{\cdot}$ and $\norm{\cdot}_r$ are two complete norms on the same space with one of them dominating the other; it follows that the norms are equivalent.

  Suppose that $X_r$ is a vector lattice and $x\in X_r$. It follows from $\abs{x}\ge\pm x$ that $\bignorm{\abs{x}}\ge\norm{x}_r$. On the other hand, if $\pm x\le y$ for some $y\in X_+$ then $\abs{x}\le y$ and the monotonicity of norm yields $\bignorm{\abs{x}}\le\norm{y}$, hence $\bignorm{\abs{x}}\le\norm{x}_r$.
\end{proof}

\begin{corollary}\label{Mr-BS}
  Let $F$ be a Banach lattice and $(E_n)$ a filtration on $F$. The space $\bigl(M_r,\norm{\cdot}_r\bigr)$ is a Banach space and $\norm{X}\le\norm{X}_r$ for every $X\in M_r$. If $M_r$ is a vector lattice then $\norm{X}_r=\bignorm{\abs{X}}$ for all $X\in M_r$.
\end{corollary}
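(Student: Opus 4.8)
The plan is to deduce this corollary directly from Theorem~\ref{reg-norm}, applied to the ordered normed space $M=M\bigl(F,(E_n)\bigr)$ equipped with the martingale norm $\norm{X}=\sup_n\norm{x_n}$. First I would verify the hypotheses of that theorem. The martingale norm is monotone, as already observed. The cone $M_+$ is closed: since $M$ is a closed subspace of $\ell_\infty(F)$ and the positive cone of $\ell_\infty(F)$ is closed, $M_+=M\cap\ell_\infty(F)_+$ is closed in $M$. Finally $M$ is complete, being a closed subspace of the Banach space $\ell_\infty(F)$. In the notation of Theorem~\ref{reg-norm}, the space $X_r=X_+-X_+$ is precisely $M_r$, and the norm $\norm{\cdot}_r$ there is exactly the regular norm on $M_r$; hence the theorem yields at once that $\norm{\cdot}_r$ is a norm on $M_r$, that $\bigl(M_r,\norm{\cdot}_r\bigr)$ is a Banach space, and that $\norm{X}_r=\bignorm{\abs{X}}$ whenever $M_r$ is a vector lattice.

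It remains to sharpen the inequality $\norm{X}\le2\norm{X}_r$ provided by the theorem to $\norm{X}\le\norm{X}_r$. Here I would use that $F$ is a genuine Banach lattice, not merely an ordered Banach space. Let $X=(x_n)\in M_r$ and let $Y=(y_n)\in M_+$ satisfy $-Y\le X\le Y$. Then $-y_n\le x_n\le y_n$ in $F$ for every $n$, so $\abs{x_n}\le y_n$, and the lattice norm on $F$ gives $\norm{x_n}=\bignorm{\abs{x_n}}\le\norm{y_n}$. Taking the supremum over $n$ gives $\norm{X}\le\norm{Y}$, and then the infimum over all such $Y$ gives $\norm{X}\le\norm{X}_r$.

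I do not expect a genuine obstacle here: the work is essentially in checking that $\bigl(M,\norm{\cdot}\bigr)$ fits the framework of Theorem~\ref{reg-norm} and in using the lattice structure of $F$ to replace the constant $2$ by $1$. The only step needing a moment's attention is the closedness of $M_+$, which follows from exhibiting it as the intersection of the closed subspace $M$ with the closed positive cone of $\ell_\infty(F)$.
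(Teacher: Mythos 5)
Your proposal is correct and follows essentially the same route as the paper: apply Theorem~\ref{reg-norm} to $M$ with the martingale norm (whose monotonicity and closed cone are noted in the text) to get completeness and the formula $\norm{X}_r=\bignorm{\abs{X}}$, and then use the lattice norm of $F$ coordinatewise to improve the factor $2$ to $\norm{X}\le\norm{X}_r$. Your explicit verification that $M_+$ is closed is a detail the paper leaves implicit, but it is the right justification.
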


\begin{proof}
Applying Theorem \ref{reg-norm} to $M$, we conclude that $\bigl(M_r,\norm{\cdot}_r\bigr)$ is a Banach space and that if $M_r$ is a vector lattice then $\norm{X}_r=\bignorm{\abs{X}}$ for all $X\in M_r$. For $Y \in M_+$ such that $\pm X\le Y$, by the definition of the martingale norm we have $\norm{X}\le\norm{Y}$. It follows that $\norm{X}\le\norm{X}_r$.
\end{proof}

The regular norm may coincide with the martingale norm on $M_r$. We recall here that a Banach lattice $F$ is said to have the \term{Fatou property} if $0\le x_\alpha\uparrow x$ implies $\norm{x_\alpha}\to\norm{x}$. Dual and order continuous Banach lattices enjoy the Fatou property; see, e.g., \cite[p.~96]{Meyer-Nieberg:91} or \cite[p.~65]{Abramovich:02}.

\begin{proposition}\label{Fatou}
Let $F$ be a Banach lattice with the Fatou property and $(E_n)$ a contractive filtration on $F$. If $M_r$ is a vector lattice and Krickeberg's formula is valid on $M_r$ then  $\norm{X}=\norm{X}_r$ for every $X \in M_r$.
\end{proposition}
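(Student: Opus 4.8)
The plan is to show $\norm{X}_r\le\norm{X}$, since the reverse inequality $\norm{X}\le\norm{X}_r$ is already provided by Corollary~\ref{Mr-BS}. Because $M_r$ is assumed to be a vector lattice, Corollary~\ref{Mr-BS} also gives $\norm{X}_r=\bignorm{\abs{X}}=\sup_n\bignorm{\abs{X}_n}$, where $\abs{X}=\bigl(\abs{X}_n\bigr)$ denotes the modulus martingale. So it suffices to bound $\bignorm{\abs{X}_n}$ by $\norm{X}$ for each fixed $n$.

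Fix $n$ and write $X=(x_m)$. By the validity of Krickeberg's formula we have $\abs{X}_n=\sup_{m\ge n}E_n\abs{x_m}$, and, as already computed in the proof of Proposition~\ref{Krickeberg}, the sequence $\bigl(E_n\abs{x_m}\bigr)_{m\ge n}$ is increasing in $m$; hence $E_n\abs{x_m}\uparrow\abs{X}_n$ as $m\to\infty$. Now I invoke the Fatou property of $F$: it yields $\bignorm{E_n\abs{x_m}}\to\bignorm{\abs{X}_n}$. On the other hand, since $(E_n)$ is contractive, $\bignorm{E_n\abs{x_m}}\le\bignorm{\abs{x_m}}=\norm{x_m}\le\norm{X}$ for every $m\ge n$. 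Passing to the limit gives $\bignorm{\abs{X}_n}\le\norm{X}$. Taking the supremum over $n$ we obtain $\norm{X}_r=\sup_n\bignorm{\abs{X}_n}\le\norm{X}$, which completes the proof.

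There is no serious obstacle here; the only point requiring care is to make sure the supremum appearing in Krickeberg's formula is realized as the limit of an \emph{increasing} sequence, so that the Fatou property is applicable — this is exactly the monotonicity of $\bigl(E_n\abs{x_m}\bigr)_{m\ge n}$ verified earlier. One should also note explicitly that $M_r\subseteq M$, so every $X\in M_r$ is bounded and $\norm{x_m}\le\norm{X}$ makes sense.
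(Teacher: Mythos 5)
Your proof is correct and follows essentially the same route as the paper's: both reduce the claim to bounding $\bignorm{\abs{X}_n}$ via Krickeberg's formula, use the monotonicity of $\bigl(E_n\abs{x_m}\bigr)_{m\ge n}$ together with the Fatou property to pass norms through the supremum, and invoke contractivity to get $\bignorm{E_n\abs{x_m}}\le\norm{x_m}\le\norm{X}$. No issues.
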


\begin{proof}
Let $X=(x_n) \in M_r$ and $Z=(z_n)=\abs{X}$. By Corollary~\ref{Mr-BS},
$\norm{X}_r=\norm{Z}\ge\norm{X}$. We need to show that $\norm{X}\ge\norm{Z}$. By Krickeberg's formula, for each $n \in \mathbb{N}$ we have $E_n\abs{x_m}\uparrow z_n$ (in~$m$).  The Fatou property yields $\norm{z_n}=\sup_m\bignorm{E_n\abs{x_m}}\le\sup_m\norm{x_m}=\norm{X}$, hence $\norm{Z}\le\norm{X}$.
\end{proof}

The following is the main result of our paper. Note that in view of Proposition \ref{Fatou} this result extends Theorem 13 in \cite{Troitsky:05}. Recall that if $F$ is order complete then $\mathcal M_r$ is a vector lattice by Theorem~\ref{Mr-VL}.

\begin{theorem}\label{Mr-BL}
  Let $F$ be an order complete Banach lattice and $(E_n)$ a filtration on $F$. Then the space $M_r$ is an ideal of $\mathcal{M}_r$ and a Banach lattice under the regular norm.
\end{theorem}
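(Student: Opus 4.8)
The plan is to first isolate a workable description of $M_r$ as a subset of $\mathcal M_r$, and then read off both assertions of the theorem with little extra effort. Recall that since $F$ is order complete, $\mathcal M_r$ is an order complete vector lattice by Theorem~\ref{Mr-VL}, so the modulus $\abs X$ is defined for every $X\in\mathcal M_r$; recall also from Corollary~\ref{Mr-BS} that $\bigl(M_r,\norm{\cdot}_r\bigr)$ is already a Banach space and $\norm X\le\norm X_r$ for all $X\in M_r$. The whole argument rides on these two facts together with the monotonicity of the martingale norm.

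The first step I would carry out is the description: for $X\in\mathcal M_r$ one has $X\in M_r$ if and only if $\pm X\le W$ for some bounded positive martingale $W$, equivalently, if and only if the martingale $\abs X$ (computed in $\mathcal M_r$) is bounded. One implication is trivial: if $X=X_1-X_2$ with $X_1,X_2\in M_+$, then $W:=X_1+X_2\in M_+$ and $\pm X\le W$. For the converse, suppose $\pm X\le W$ with $W$ a bounded positive martingale. Since $\mathcal M_r$ is a vector lattice we may write $X=X^+-X^-$, and $\pm X\le W$ together with $0\le W$ forces $0\le X^+\le W$ and $0\le X^-\le W$ in $\mathcal M_r$, hence coordinate-wise; monotonicity of the martingale norm then gives $\norm{X^\pm}\le\norm W<\infty$, so $X^+,X^-\in M_+$ and $X\in M_r$. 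The equivalence with ``$\abs X$ bounded'' is immediate from $\pm X\le\abs X\le W$ and $0\le\abs X\le W$.

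With this in hand the rest is bookkeeping. To see $M_r$ is an ideal of $\mathcal M_r$, let $X\in\mathcal M_r$ and $Y\in M_r$ with $\abs X\le\abs Y$; writing $Y=Y_1-Y_2$ with $Y_i\in M_+$ and $W:=Y_1+Y_2\in M_+$ we get $\pm X\le\abs X\le\abs Y\le W$, so the description yields $X\in M_r$. Being an ideal, $M_r$ is in particular a sublattice of $\mathcal M_r$, hence a vector lattice in which the modulus of an element agrees with its modulus in $\mathcal M_r$. For the Banach lattice claim: $\bigl(M_r,\norm{\cdot}_r\bigr)$ is complete by Corollary~\ref{Mr-BS}, and since $M_r$ is a vector lattice the same corollary gives $\norm X_r=\bignorm{\abs X}$ for every $X\in M_r$; finally, if $\abs X\le\abs Y$ then $0\le\abs X\le\abs Y$ coordinate-wise, so monotonicity of the martingale norm gives $\bignorm{\abs X}\le\bignorm{\abs Y}$, i.e.\ $\norm{\cdot}_r$ is a lattice norm. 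Thus $\bigl(M_r,\norm{\cdot}_r\bigr)$ is a Banach lattice.

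The only real obstacle is the first step, and even there the work is modest: the crux is that we are allowed to split $X=X^+-X^-$ inside $\mathcal M_r$ at all, which is exactly where order completeness of $F$ (via Theorem~\ref{Mr-VL}) enters — without it $\mathcal M_r$ need not be a lattice and the statement has no content. Once that splitting is available, everything else reduces to the monotonicity of the martingale norm and careful attention to the fact that all moduli in play are computed in $\mathcal M_r$ (which, as noted, coincides with the modulus in $M_r$ once the ideal property is known).
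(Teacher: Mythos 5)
Your proof is correct and follows essentially the same route as the paper: use Theorem~\ref{Mr-VL} to get the modulus in $\mathcal M_r$, monotonicity of the martingale norm to see that a positive martingale dominated by a bounded one is bounded (hence $M_r$ is an ideal), and Corollary~\ref{Mr-BS} for completeness and the identity $\norm{X}_r=\bignorm{\abs{X}}$. If anything, you are slightly more careful than the paper in explicitly checking that $\norm{\cdot}_r$ is a lattice norm, a point the paper leaves implicit.
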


\begin{proof}
Note first that if $X\in M_r$ and $Y\in \mathcal{M}_r$ such that $0\le Y\le X$ then $Y\in M_r$. Let $X\in M_r$. Then there exists $Y\in M_r$ such that $Y\ge\pm X$. By Theorem~\ref{Mr-VL}, $\abs{X}$ exists in $\mathcal{M}_r$. Clearly, $\abs{X}\le Y$ and, therefore, $\abs{X}\in M_r$. Hence, $M_r$ is a vector lattice and an ideal of $\mathcal{M}_r$.

By Corollary~\ref{Mr-BS}, $\bigl(M_r,\norm{\cdot}_r\bigr)$ is a Banach lattice.
\end{proof}

\begin{example} \emph{$M_r$ need not be a Banach lattice under the martingale norm.}
 Let $F=\ell_\infty$, equipped with following equivalent norm:
 $$\bignorm{(a_i)} =\bignorm{(a_i)}_\infty+\limsup\abs{a_i}$$

It can be easily verified that $\bigl(F,\norm{\cdot}\bigr)$ is a Banach lattice. Let $(E_n)$ and $X=(x_n)$ be as in Example~\ref{halves}. Note that each $E_n$ is order continuous. Clearly, $M=M_r$. By Theorem~\ref{Mr-BL}, $M_r$ is a Banach lattice under $\norm{\cdot}_r$ and an ideal in $\mathcal M_r$. For each $n$, we have
  \begin{displaymath}
    E_n\abs{x_m}=\abs{x_m}=(\underbrace{1,1,\dots,1,1}_{2m},0,\dots)\uparrow\one,
  \end{displaymath}
  By Proposition~\ref{Krickeberg}\eqref{oc-oc}, lattice operations are given by Krickeberg's formula. It follows that the modulus of $X$ is the constant martingale $\abs{X}_n=\one$. We have $\bignorm{\abs{X}}=2$ and $\norm{x_n}=1$ for each $n$, thus $1=\norm{X}<\bignorm{\abs{X}}$, so that  $M_r$ fails to be a Banach lattice under the martingale norm.
\end{example}

Finally, we study under which conditions we have $M=M_r$. Recall that a vector lattice $F$ has a \term{strong unit} $e$ whenever $F=\bigcup_{n=1}^\infty [-ne,ne]$. A Banach lattice $F$ has an \term{order continuous norm} whenever $x_\alpha\downarrow 0$ implies $x_\alpha\to 0$; $F$ is a \term{KB-space} if it does not contain a sublattice isomorphic to $c_0$.

\begin{proposition}
  Let $F$ be a Banach lattice with a strong unit $e$ and $(E_n)$ a uniformly bounded filtration on $F$. Then $M=M_r$. If, in addition, $F$ is order complete then $M$ is a vector lattice with a strong unit.
\end{proposition}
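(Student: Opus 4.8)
The plan is to prove $M=M_r$ first and then, under order completeness, exhibit an explicit strong unit. Since a difference of bounded positive martingales is a bounded martingale, the inclusion $M_r\subseteq M$ is automatic, so the content is the reverse inclusion. The key observation is that $W:=(E_ne)_n$ is itself a bounded positive martingale: each $E_n$ is positive, so $E_ne\ge 0$; the martingale identity holds because $E_nE_me=E_{n\wedge m}e=E_ne$ whenever $m\ge n$; and $\sup_n\norm{E_ne}\le\bigl(\sup_n\norm{E_n}\bigr)\norm{e}<\infty$ by uniform boundedness of the filtration. Hence $W\in M_+$.

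To dominate an arbitrary $X=(x_n)\in M$ by a multiple of $W$, I would invoke the standard fact that, since $e$ is a strong unit, the norm of $F$ is equivalent to the order-unit norm $\norm{x}_e=\inf\{\lambda>0:\abs x\le\lambda e\}$; this follows from monotonicity of the lattice norm (which bounds the interval $[-e,e]$) together with the Baire category theorem applied to $F=\bigcup_n[-ne,ne]$. Consequently $K:=\sup_n\norm{x_n}_e<\infty$, i.e.\ $\abs{x_n}\le Ke$ for every $n$, and then for $m\ge n$
\begin{displaymath}
  \abs{x_n}=\bigabs{E_nx_m}\le E_n\abs{x_m}\le K\,E_ne.
\end{displaymath}
Thus $\pm X\le KW$, so $X\in M_r$, and $M=M_r$.

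For the second assertion, assume in addition that $F$ is order complete. Then $\mathcal M_r$ is a vector lattice by Theorem~\ref{Mr-VL}, and by Theorem~\ref{Mr-BL} the space $M_r$ is an ideal of $\mathcal M_r$; in particular $M=M_r$ is a vector lattice. It then remains to check that $W$ is a strong unit of $M$. Given $X\in M$, the displayed estimate produces $\lambda:=K>0$ with $\pm X\le\lambda W$; since $\lambda W\in M_+$ dominates both $X$ and $-X$, it dominates $\abs X=\sup\{X,-X\}$, so $\abs X\le\lambda W$. As $W\in M_+$, this says exactly that $M=\bigcup_{\lambda>0}\{X\in M:\abs X\le\lambda W\}$, i.e.\ $W$ is a strong unit of $M$.

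The only step that is not pure bookkeeping is the equivalence of $\norm{\cdot}$ with the order-unit norm $\norm\cdot_e$, and even that is classical. The real content of the proof is the elementary remark that $(E_ne)$ is a martingale and that uniform boundedness keeps it bounded, so that this single martingale serves both as a universal dominating martingale (yielding $M=M_r$) and as a strong unit of $M$.
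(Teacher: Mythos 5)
Your proposal is correct and follows essentially the same route as the paper: both use the equivalence of the norm with the order-unit norm generated by $e$ to get $\pm x_n\le C e$ uniformly, apply the positive projections $E_n$ to see that the bounded positive martingale $(E_ne)$ dominates $\pm X$, and then invoke Theorem~\ref{Mr-BL} to conclude that $M=M_r$ is a vector lattice in which $(E_ne)$ is a strong unit.
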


\begin{proof}
  It is known that the original norm of $F$ is equivalent to the norm $\norm{\cdot}_\infty$ generated by $e$; see, e.g., \cite[p.~194]{Aliprantis:06}. In particular, there exists $C>0$ such that $\abs{x}\le C\norm{x}e$ for every $x\in F$.

  For each $n$, put $y_n=E_ne$. Clearly, $Y=(y_n)$ is a bounded positive martingale. Let $X\in M$. Then for every $n$ we have $\pm x_n\le C\norm{x_n}e\le C\norm{X}e$. Applying $E_n$, we get $\pm x_n\le C\norm{X}y_n$. It follows that $\pm X\le C\norm{X}Y$, so that $X$ is regular. Hence, $M=M_r$. If, in addition, $F$ is order complete then $M$ is a vector lattice by Theorem~\ref{Mr-BL}. It follows from $\pm X\le C\norm{X}Y$ that $Y$ is a strong unit in $M$.
\end{proof}

\begin{theorem}
Let $F$ be a Banach lattice with an order continuous norm. Then the following are equivalent.
\begin{enumerate}
\item\label{KB-KB} $F$ is a KB-space;
\item\label{KB-MMr} $M=M_r$ for every uniformly bounded filtration $(E_n)$ on $F$;
\item\label{KB-M-VL} $M$ is a vector lattice for any uniformly  bounded filtration $(E_n)$ on $F$.
\end{enumerate}
\end{theorem}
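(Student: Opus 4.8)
I plan to establish the cycle $\eqref{KB-KB}\Rightarrow\eqref{KB-MMr}\Rightarrow\eqref{KB-M-VL}\Rightarrow\eqref{KB-KB}$. Two standard facts will be used: a Banach lattice with order continuous norm is order complete, so by Theorems~\ref{Mr-VL} and~\ref{Mr-BL} the spaces $\mathcal M_r$ and $M_r$ are always a vector lattice and a Banach lattice here; and in a KB-space every increasing norm bounded sequence of positive elements converges to its supremum (the classical characterisation, see~\cite{Meyer-Nieberg:91}). Granting these, $\eqref{KB-MMr}\Rightarrow\eqref{KB-M-VL}$ is immediate, since if $M=M_r$ then $M$ inherits the lattice structure of $M_r$. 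For $\eqref{KB-KB}\Rightarrow\eqref{KB-MMr}$, let $(E_n)$ be uniformly bounded with $\sup_n\norm{E_n}=K$ and $X=(x_n)\in M$. For each fixed $n$ the sequence $\bigl(E_n\abs{x_m}\bigr)_{m\ge n}$ increases in $m$ (since $E_n\abs{x_m}=E_n\bigabs{E_mx_{m+1}}\le E_nE_m\abs{x_{m+1}}=E_n\abs{x_{m+1}}$) and is bounded by $K\norm{X}$, hence converges to $z_n:=\sup_{m\ge n}E_n\abs{x_m}$. Continuity of the $E_k$ gives $E_kz_n=\lim_mE_kE_n\abs{x_m}=\lim_mE_k\abs{x_m}=z_k$ for $k\le n$, so $Z=(z_n)$ is a bounded positive martingale, and $z_n\ge E_n\abs{x_n}\ge\abs{x_n}\ge\pm x_n$, whence $X\in M_r$.

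\textbf{The main direction $\eqref{KB-M-VL}\Rightarrow\eqref{KB-KB}$, by contraposition.}
Assume $F$ is order continuous but \emph{not} a KB-space; I will exhibit a uniformly bounded filtration together with a bounded martingale that is not regular. This suffices: $M_r\subseteq M$ always, so $M\neq M_r$; and if $M$ were a vector lattice then, writing any $X\in M$ as $X=(X\vee0)-\bigl((X\vee0)-X\bigr)$ with both terms in $M_+$, we would get $M=M_r$, a contradiction. Since $F$ is not a KB-space it contains a sublattice isomorphic to $c_0$; the normalised images of the unit vectors form a disjoint sequence $(u_i)$ in $F_+$ with $\norm{u_i}=1$ and $C:=\sup_N\bignorm{\sum_{i=1}^Nu_i}<\infty$. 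Put $v_N=\sum_{i=1}^Nu_i$. Then $(v_N)$ is increasing and norm bounded but has no upper bound in $F$: an upper bound would make it order bounded, hence norm Cauchy by order continuity, contradicting $\norm{v_{N+1}-v_N}=\norm{u_{N+1}}=1$.

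\textbf{The construction.}
Using order completeness, for each $i$ fix the band projection $P_i$ onto the band generated by $u_i$ and a positive $\psi_i$ on that band with $\psi_i(u_i)=1=\norm{\psi_i}$, and set $\phi_i=\psi_i\circ P_i\in F^*_+$, so $\norm{\phi_i}=1$, $\phi_i(u_i)=1$, and $\phi_i(u_j)=0$ for $j\ne i$. Let $Q$ be the band projection onto the disjoint complement of the band generated by all the $u_i$, and define rank-one positive projections $R_i=\phi_i\otimes u_i$ and $S_k=\tfrac12(\phi_{2k-1}+\phi_{2k})\otimes(u_{2k-1}+u_{2k})$. For $n\ge0$ put
\begin{displaymath}
  E_n=Q+\sum_{i\le 2n}R_i+\sum_{k>n}S_k .
\end{displaymath}
The infinite sums converge on every $z\in F$ because the summands are pairwise disjoint, $\phi_i(z)\to0$, and $\bignorm{\sum_{i=a}^{b}u_i}\le C$; the same estimate gives $\norm{E_n}\le1+C$. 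One checks that the summands occurring in a single $E_n$ are mutually orthogonal idempotents, so each $E_n$ is a positive projection, and that $E_nE_m=E_{n\wedge m}$, so $(E_n)$ is a uniformly bounded filtration. From the formulas, $E_n$ fixes $u_{2j-1}$ and $u_{2j}$ when $j\le n$ and sends each of them to $\tfrac12(u_{2j-1}+u_{2j})$ when $j>n$. Hence $x_n:=\sum_{k=1}^n(u_{2k}-u_{2k-1})$ (with $x_0=0$) is a martingale, and $\abs{x_n}=\sum_{k=1}^n(u_{2k-1}+u_{2k})=v_{2n}$, so $\norm{x_n}\le C$ and $X=(x_n)\in M$. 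But $E_0$ fixes each $u_{2k-1}+u_{2k}$, so $E_0\abs{x_n}=E_0v_{2n}=v_{2n}$. Therefore any positive martingale $Y=(y_n)$ with $\pm X\le Y$ would satisfy $y_0=E_0y_n\ge E_0\abs{x_n}=v_{2n}$ for every $n$, which is impossible since $(v_N)$ has no upper bound. Thus $X\in M\setminus M_r$, as required.

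\textbf{Expected obstacle.}
The crux is the construction in the previous paragraph: from the purely abstract hypothesis that $F$ contains $c_0$ one must manufacture a uniformly bounded \emph{positive} filtration on all of $F$ whose projections ``average'' the successive pairs $u_{2k-1},u_{2k}$ (so they fail to be lattice homomorphisms) yet fix the sums $u_{2k-1}+u_{2k}$. The rank-one operators $R_i,S_k$ built from positive norming functionals do this; the remaining work — convergence of the defining series from order continuity and bounded partial sums, mutual orthogonality of the relevant families, the identities $E_n^2=E_n$ and $E_nE_m=E_{n\wedge m}$, the uniform bound $\sup_n\norm{E_n}<\infty$, and the martingale property of $X$ — is routine but must be carried out with care.
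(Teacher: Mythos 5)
Your proposal is correct, and two of the three implications match the paper's argument: the KB $\Rightarrow$ regularity direction via norm convergence of the increasing bounded sequences $\bigl(E_n\abs{x_m}\bigr)_m$, and the observation that lattice structure of $M$ gives $M=M_+-M_+=M_r$, are exactly what the paper does. Where you genuinely diverge is the reverse direction (non-KB implies a bounded non-regular martingale). The paper extracts a closed sublattice copy of $c_0$, invokes \cite[Corollary~2.4.3]{Meyer-Nieberg:91} to obtain a positive projection $P\colon F\to c_0$, and then simply transplants the ``averaging halves'' filtration of Example~\ref{halves} to $F$ as $(PE_n)$; the non-regularity argument is then identical to yours ($E_0\abs{x_n}$ increasing, order bounded by $z_0$, hence norm convergent in the order continuous $F$, hence convergent in $c_0$ --- false). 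You instead rebuild that same averaging filtration on all of $F$ by hand, out of the band projection $Q$ onto the complement of the $u_i$'s and the rank-one positive idempotents $R_i=\phi_i\otimes u_i$ and $S_k=\tfrac12(\phi_{2k-1}+\phi_{2k})\otimes(u_{2k-1}+u_{2k})$. I checked the key identities: $\phi_i(u_j)=\delta_{ij}$, the orthogonality relations, $S_k(R_{2k-1}+R_{2k})=(R_{2k-1}+R_{2k})S_k=S_k$ (which is what makes $E_nE_m=E_{n\wedge m}$ work), convergence of the tails from $\phi_i(z)\to0$ and the $c_0$-type bound on $\bignorm{\sum_{i=a}^bu_i}$, and the uniform bound $\norm{E_n}\le 1+C$; all of these go through. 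Your construction buys self-containedness --- you avoid the complemented-$c_0$ theorem entirely, at the cost of order completeness being used to get the band projections $P_i$ and $Q$ (harmless, since order continuity gives it) and of a page of orthogonality bookkeeping. The paper's route is shorter and delegates the hard work to the cited projection result. Both are valid proofs of the theorem.
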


\begin{proof}
  \eqref{KB-M-VL}$\Rightarrow$\eqref{KB-MMr} trivially. \eqref{KB-MMr}$\Rightarrow$\eqref{KB-M-VL} by Theorem~\ref{Mr-BL}

\eqref{KB-KB}$\Rightarrow$\eqref{KB-M-VL} The proof is similar to that of \cite[Theorem~7]{Troitsky:05}.  Let $X=(x_n) \in M$. Fix some $n \in \mathbb{N}$. The sequence $\bigl(E_n\abs{x_m}\bigr)_{m=n}^\infty$ is increasing and norm bounded. Since $F$ is a KB-space, it follows that $z_n=\lim_m E_n\abs{x_m}$ exists; then $Z$ is a martingale and $Z=\abs{X}$.

\eqref{KB-MMr}$\Rightarrow$\eqref{KB-KB} Suppose that $F$ is not a KB-space. Then $c_0$ is lattice embeddable in $F$. Without lose of generality, we can assume that $c_0$ is a closed sublattice of $F$. Let $(E_n)$ and $X=(x_n)$ be as in Example~\ref{halves}; view $(E_n)$ as a (uniformly bounded) filtration on $c_0$ and $X$ as a martingale in $c_0$.

By \cite[Corollary~2.4.3]{Meyer-Nieberg:91}, there exist a positive projection $P:F \rightarrow c_0$. It is easy to see that $(PE_n)$ is again a uniformly bounded filtration on $F$ and $X$ is a bounded martingale with respect to it. We claim that $X$ is not regular. Indeed, suppose that $\pm X\le Z$ for some positive martingale $Z=(z_n)$ in $F$. Then $\pm x_n\le z_n$ for every $n$ yields $\abs{x_n}\le z_n$, so that
\begin{math}
  E_0\abs{x_n}=PE_0\abs{x_n}\le PE_0z_n=z_0.
\end{math}
It follows that the increasing sequence $\bigl(E_0\abs{x_n}\bigr)$ is bounded above in $F$, hence it converges in $F$ because $F$ is order continuous. Therefore, $(E_0\abs{x_n})$ converges in $c_0$, which is clearly false.
\end{proof}

\medskip

\textbf{Acknowledgements.} We would like to thanks O.Blasco and N.Gao for helpful discussions.

\end{document}